\newcommand{\inj}{\mathrm{inj}}
\newcommand{\st}{\,\big|\,}
\newcommand{\real}{\mathbb{R}}
\newtheorem{theorem}{Theorem}[section]
\newtheorem{lemma}[theorem]{Lemma}
\newtheorem{corollary}[theorem]{Corollary}
\numberwithin{equation}{section}
\begin{document}

\title[CONVEXITY RADIUS OF A RIEMANNIAN MANIFOLD]{THE CONVEXITY RADIUS OF A RIEMANNIAN MANIFOLD}

\author[JAMES DIBBLE]{JAMES DIBBLE}
\address{Department of Mathematics, Rutgers University--New Brunswick, 110 Frelinghuysen Road, Piscataway, NJ 08854}
\curraddr{Department of Mathematics, University of Iowa, 14 MacLean Hall, Iowa City, IA 52242}
\email{james-dibble@uiowa.edu}
\thanks{These results are included in my doctoral dissertation \cite{Dibble2014}. I would like to thank my dissertation advisor, Xiaochun Rong, for his guidance and support, Christopher Croke for suggesting revisions to an early draft of this paper, and Oguz Durumeric and the anonymous referee for each identifying statements in need of correction and clarification.}

\subjclass[2010]{Primary 53C20}

\date{}

\begin{abstract}
    The ratio of convexity radius over injectivity radius may be made arbitrarily small within the class of compact Riemannian manifolds of any fixed dimension at least two. This is proved using Gulliver's method of constructing manifolds with focal points but no conjugate points. The approach is suggested by a characterization of the convexity radius that resembles a classical result of Klingenberg about the injectivity radius.
\end{abstract}

\maketitle

\section{Introduction}

A subset $X$ of a Riemannian manifold $M$ is \textbf{strongly convex} if any two points in $X$ are joined by a unique minimal geodesic $\gamma : [0,1] \rightarrow M$ and each such geodesic maps entirely into $X$. It is well known that there exist functions $\inj,r : M \rightarrow (0,\infty]$ such that, for each $p \in M$,
\[
    \inj(p) = \max \big\{ R > 0 \st \exp_p|_{B(0,s)} \textrm{ is injective for all } 0 < s < R \big\}
\]
and
\[
    r(p) = \max \big\{ R > 0 \st B(p,s) \textrm{ is strongly convex for all } 0 < s < R \big\}\textrm{,}
\]
where $B(0,s) \subset T_p M$ denotes the Euclidean ball of radius $s$ around the origin. The number $\inj(p)$ is the \textbf{injectivity radius at $p$}, and $r(p)$ is the \textbf{convexity radius at $p$}. The \textbf{conjugate radius at $p$} is defined, as is customary, to be
\begin{align*}
    r_c(p) = \min \big\{ T > 0 \st \exists \textrm{ a} &\textrm{ non-trivial normal Jacobi field } J \textrm{ along a unit-speed}\\
    &\textrm{geodesic } \gamma \textrm{ with } \gamma(0) = p \textrm{, } J(0) = 0 \textrm{, and } J(T) = 0 \big\}\textrm{,}
\end{align*}
and the \textbf{focal radius at $p$} is introduced here to be
\begin{align*}
    r_f(p) = \min \big\{ T > 0 \st \exists \textrm{ a} & \textrm{ non-trivial normal Jacobi field } J \textrm{ along a unit-speed}\\
    &\textrm{geodesic } \gamma \textrm{ with } \gamma(0) = p \textrm{, } J(0) = 0 \textrm{, and } \| J \|'(T) = 0 \big\} \textrm{.}
\end{align*}
Either of these is defined to be infinite if the corresponding Jacobi fields do not exist. Short arguments show that they are well defined and that $r_f(p) \leq r_c(p)$, with equality if and only if both are infinite.

If $\gamma : [a,b] \rightarrow M$ is a geodesic connecting $p$ to $q$, then $p$ is \textbf{conjugate to $q$ along $\gamma$} if there exists a non-trivial normal Jacobi field $J$ along $\gamma$ that vanishes at the endpoints. If $\sigma : I \rightarrow M$ is a geodesic and $\gamma : [a,b] \rightarrow M$ is a geodesic connecting $p$ to $\sigma(s)$, where $I$ is an interval and $s \in I$, then $p$ is \textbf{focal to $\sigma$ along $\gamma$} if there exists a non-trivial normal Jacobi field $J$ along $\gamma$ such that $J(a) = 0$ and $J(b) = \sigma'(s)$. Conjugate and focal points correspond to singularities of the restriction of the exponential map to $T_q M$ and the normal bundle of $\sigma(I)$, respectively. Employing arguments similar to the proof of Proposition 4 in \cite{O'Sullivan1974}, one finds that the conjugate radius at $p$ is the length of the shortest geodesic $\gamma : [a,b] \rightarrow M$ along which $p$ is conjugate to $\gamma(b)$, while the focal radius at $p$ is the length of the shortest geodesic $\gamma : [a,b] \rightarrow M$ along which $p$ is focal to a non-constant geodesic normal to $\gamma$ at $\gamma(b)$. Let $\inj(M) = \inf_{p \in M} \inj(p)$, and similarly define numbers $r(M)$, $r_c(M)$, and $r_f(M)$. It's widely known that, when $M$ has sectional curvature bounded above, the first three are positive. The results in the third section of \cite{Eschenburg1987} imply the same about the focal radius.

When $M$ is compact, it was shown by Berger \cite{Berger1976} that $r(M) \leq \frac{1}{2}\inj(M)$. Berger has also pointed out that there are no examples in the literature where this inequality is known to be strict \cite{Berger2003}. It will be shown in this paper that $\inf \frac{r(M)}{\inj(M)} = 0$ over the class of compact manifolds of any fixed dimension at least two. The proof is suggested by alternative characterizations of the injectivity and convexity radiuses. Klingenberg \cite{Klingenberg1959} showed that $\inj(M) = \min \big\{ r_c(M),\frac{1}{2}\ell_c(M) \big\}$, where $\ell_c(M)$ is the length of the shortest non-trivial closed geodesic in $M$. It will be shown here that $r(M) = \min \big\{ r_f(M), \frac{1}{4}\ell_c(M) \big\}$. To the best of my knowledge, this equality does not appear elsewhere in the literature. Gulliver \cite{Gulliver1975} introduced a method of constructing compact manifolds with focal points but no conjugate points. For such manifolds, $r_f(M) < \infty$ and $r_c(M) = \infty$. The main result follows by showing that Gulliver's method may be used to construct manifolds with $\frac{r_f(M)}{\ell_c(M)}$ arbitrarily small.

\section{Geometric radiuses}

When $M$ is complete, each $v \in T M$ determines a geodesic $\gamma_v : (-\infty,\infty) \rightarrow M$ by the rule $\gamma_v(t) = \exp(tv)$. For each $p \in M$, the \textbf{cut locus at $p$} is the set
\begin{align*}
    \mathrm{cut}(p) = \big\{ v \in T_p M \st \gamma_v&|_{[0,T]} \textrm{ is minimal if and only if } T \leq 1  \big\}\textrm{,}
\end{align*}
and the \textbf{conjugate locus at $p$} is
\[
    \mathrm{conj}(p) = \big\{ v \in T_p M \st \exp_p : T_p M \rightarrow M \textrm{ is singular at } v \big\}\textrm{.}
\]
A \textbf{geodesic loop} is a geodesic $\gamma : [a,b] \rightarrow M$ such that $\gamma(a) = \gamma(b)$, while a \textbf{closed geodesic} additionally satisfies $\gamma'(a) = \gamma'(b)$. For each $p \in M$, denote by $\ell(p)$ the length of the shortest non-trivial geodesic loop based at $p$. Let $\ell(M) = \inf_{p \in M} \ell(p)$, and recall from the introduction that, for compact $M$, $\ell_c(M)$ equals the length of the shortest non-trivial closed geodesic in $M$. According to a celebrated theorem of Fet--Lyusternik \cite{FetLyusternik1951}, $0 < \ell_c(M) < \infty$. A general relationship between $\inj$ and $r_c$ is described by the following classical result of Klingenberg \cite{Klingenberg1959}.

\begin{theorem}[Klingenberg]\label{klingenberg lemma}
    Let $M$ be a complete Riemannian manifold. If $p \in M$ and $v \in \mathrm{cut}(p)$ has length $\inj(p)$, then one of the following holds:\\
        \indent (i) $v \in \mathrm{conj}(p)$; or\\
        \indent (ii) $\gamma_v|_{[0,2]}$ is a geodesic loop.\\
    \noindent Consequently, $\inj(p) = \min \big\{ r_c(p), \frac{1}{2}\ell(p) \big\}$.
\end{theorem}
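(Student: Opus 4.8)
The plan is to prove the dichotomy (i)--(ii) first and then read off $\inj(p)=\min\{r_c(p),\tfrac12\ell(p)\}$ as a formal consequence. One may assume $\inj(p)<\infty$, for otherwise no geodesic from $p$ ever meets a cut point, so $r_c(p)=\infty$, and any non-trivial geodesic loop at $p$ would be a positive-length geodesic from $p$ to $p$, hence non-minimizing, hence would produce a cut point at finite distance -- so also $\ell(p)=\infty$ and the identity is trivial. Assuming $\inj(p)<\infty$, the cut locus $\exp_p(\mathrm{cut}(p))$ is a nonempty closed subset of $M$ at distance exactly $\rho:=\inj(p)$ from $p$; a point $q$ realizing this distance is joined to $p$ by a minimal geodesic $\gamma:[0,\rho]\to M$ with $v:=\rho\,\gamma'(0)\in\mathrm{cut}(p)$ and $q=\gamma(\rho)$, and every $v\in\mathrm{cut}(p)$ of length $\inj(p)$ is of this form, so it suffices to treat such a $v$.

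First I would establish the basic alternative: either $q$ is conjugate to $p$ along $\gamma$ -- which, by the standard identification of conjugacy with singularity of the exponential map, means $d\exp_p$ is singular at $v$, i.e. $v\in\mathrm{conj}(p)$, yielding (i) -- or there is a second minimal geodesic from $p$ to $q$. For the latter I would run the limit-of-shortcuts argument: extend $\gamma$ past $q$ and set $q_t=\gamma(\rho+t)$; since $q$ is the cut point of $p$ along $\gamma$, the arc $\gamma|_{[0,\rho+t]}$ is not minimal, so $\rho-t\le d(p,q_t)<\rho+t$; choosing minimal geodesics $\sigma_t$ from $p$ to $q_t$ and passing to a subsequential limit $w$ of the unit vectors $\sigma_t'(0)\in T_pM$ produces a minimal geodesic $\sigma=\exp_p(\,\cdot\,w)|_{[0,\rho]}$ from $p$ to $q$; and $w\ne\gamma'(0)$, since otherwise the preimages $d(p,q_t)\,\sigma_t'(0)$ and $(\rho+t)\gamma'(0)$ of $q_t$, both converging to $v$, would coincide for large $t$ by injectivity of $\exp_p$ near $v$, forcing $d(p,q_t)=\rho+t$.

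The crux -- and the step I expect to be the main obstacle -- is to upgrade this to the opposite-velocity relation $\sigma'(\rho)=-\gamma'(\rho)$; here one must use that $\rho$ is the \emph{least} distance from $p$ to the cut locus, not merely that $q$ is one cut point. I would argue by contradiction: supposing the angle $\theta$ between $\gamma'(\rho)$ and $\sigma'(\rho)$ lies in $(0,\pi)$, first note that $q$ is also the cut point of $p$ along $\sigma$ (else $\sigma|_{[0,\rho]}$ would be the unique minimizer to $q$, contradicting $\gamma\ne\sigma$); then for small $\epsilon>0$ the points $x_\epsilon=\gamma(\rho-\epsilon)$ and $y_\epsilon=\sigma(\rho-\epsilon)$ lie at distance $\rho-\epsilon$ from $p$, while a first-variation estimate at $q$ (where the broken geodesic $x_\epsilon\to q\to y_\epsilon$ turns through the angle $\theta<\pi$) gives $d(x_\epsilon,y_\epsilon)\le 2\epsilon(1-\eta)$ for some fixed $\eta>0$. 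Splicing $\gamma|_{[0,\rho-\epsilon]}$, a minimal geodesic from $x_\epsilon$ to $y_\epsilon$, and the reverse of $\sigma|_{[0,\rho-\epsilon]}$ yields a loop at $p$ of length below $2\rho$; using the minimality of the three pieces together with the fact that $\exp_p$ restricts to a diffeomorphism of $B(0,\rho)$ onto $B(p,\rho)$, one should be able to exhibit a point of the cut locus strictly within distance $\rho$ of $p$, contradicting $\rho=\inj(p)$ -- extracting this contradiction cleanly is the delicate point. Once $\sigma'(\rho)=-\gamma'(\rho)$ is in hand, extending $\gamma$ past $q$ in the direction $\gamma'(\rho)=-\sigma'(\rho)$ makes it agree with the reverse of $\sigma$, so $\gamma(2\rho)=\sigma(0)=p$; since $\gamma_v(s)=\gamma(\rho s)$, the curve $\gamma_v|_{[0,2]}$ is a geodesic loop at $p$, which is (ii).

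For the concluding formula, two inequalities are general. First, $r_c(p)\ge\inj(p)$, since along any geodesic from $p$ the first conjugate point is no nearer than the cut point. Second, $\ell(p)\ge 2\inj(p)$: the midpoint $m$ of a geodesic loop at $p$ of length $L$ is joined to $p$ by two geodesics of length $L/2$, and these are distinct because $\gamma'(0)=-\gamma'(L)$ would force $\gamma(L-t)=\gamma(t)$ and hence $\gamma'(L/2)=0$; consequently either $m$ or an earlier point of the loop lies in the cut locus of $p$ at distance $\le L/2$. Thus $\min\{r_c(p),\tfrac12\ell(p)\}\ge\inj(p)$. The reverse inequality follows from the dichotomy: in case (i), $v\in\mathrm{conj}(p)$ with $|v|=\rho$ gives a non-trivial normal Jacobi field along $\gamma$ vanishing at both endpoints, so $r_c(p)\le\rho$; in case (ii), $\gamma_v|_{[0,2]}$ is a geodesic loop of length $2\rho$, so $\ell(p)\le 2\rho$. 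In either case $\min\{r_c(p),\tfrac12\ell(p)\}\le\inj(p)$, so equality holds.
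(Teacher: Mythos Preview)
The paper does not supply its own proof of this theorem: Theorem~\ref{klingenberg lemma} is attributed to Klingenberg and stated without argument, with only the consequence (Corollary~\ref{klingenberg corollary}) used later. So there is nothing in the paper to compare your proposal against directly.

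On its own merits, your outline is the classical one and is mostly sound: the reduction to the case $\inj(p)<\infty$, the limit-of-shortcuts construction of a second minimal geodesic $\sigma$ when $v\notin\mathrm{conj}(p)$, and the derivation of the formula $\inj(p)=\min\{r_c(p),\tfrac12\ell(p)\}$ from the dichotomy are all correct. The acknowledged gap is real, however, and your suggested route through it is not the standard one and does not close as written. From the broken loop $\gamma|_{[0,\rho-\varepsilon]}\cdot\alpha\cdot\sigma|_{[0,\rho-\varepsilon]}^{-1}$ of length $<2\rho$ you cannot directly extract a cut point at distance $<\rho$: the short bridge $\alpha$ from $x_\varepsilon$ to $y_\varepsilon$ need not remain in $B(p,\rho)$ (your estimate only gives $d(p,\alpha(t))\le \rho+\varepsilon(1-2\eta)$, which exceeds $\rho$ once $\theta$ is large enough that $\sin(\theta/2)>\tfrac12$), and a non-geodesic loop of length $<2\rho$ does not by itself force anything about the cut locus.

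The usual way to finish is more direct and avoids the loop entirely. Assuming $v\notin\mathrm{conj}(p)$, one picks $w\in T_qM$ with $\langle w,\gamma'(\rho)\rangle<0$ and $\langle w,\sigma'(\rho)\rangle<0$ (possible exactly when $\gamma'(\rho)\neq-\sigma'(\rho)$), and uses the local inverse of $\exp_p$ near $v$ to produce, for small $s>0$, a geodesic $\gamma_s$ from $p$ to $c(s)=\exp_q(sw)$ with $L(\gamma_s)<\rho$; an analogous local inverse near $\rho\,\sigma'(0)$ yields a second geodesic $\sigma_s$ from $p$ to $c(s)$ with $L(\sigma_s)<\rho$. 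Since $\exp_p$ is a diffeomorphism of $B(0,\rho)$ onto $B(p,\rho)$, there is exactly one geodesic from $p$ to $c(s)$ of length $<\rho$; but $\gamma_s\neq\sigma_s$ for small $s$, a contradiction. Note that this argument, as usually presented (e.g.\ Cheeger--Ebin), assumes $q$ is not conjugate to $p$ along \emph{any} minimal geodesic, so that the local inverse near $\rho\,\sigma'(0)$ is available; the resulting dichotomy is then ``either some minimal geodesic to $q$ has a conjugate point, or there are exactly two and they fit together into a loop.'' That weaker dichotomy already yields $\inj(p)=\min\{r_c(p),\tfrac12\ell(p)\}$ by the same bookkeeping you give at the end, which is all the paper needs.
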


\noindent Klingenberg used this to characterize $\inj(M)$.

\begin{corollary}[Klingenberg]\label{klingenberg corollary}
    The injectivity radius of a complete Riemannian manifold $M$ is given by $\inj(M) = \min \big\{ r_c(M),\frac{1}{2}\ell(M) \big\}$. When $M$ is compact, it is also given by $\inj(M) = \min \big\{ r_c(M),\frac{1}{2}\ell_c(M) \big\}$.
\end{corollary}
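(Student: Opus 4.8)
The plan is to read off the first identity directly from Theorem~\ref{klingenberg lemma} and then to reduce the compact statement to a perturbation argument localized at a point where $\inj$ is minimized. For the first identity, Theorem~\ref{klingenberg lemma} gives $\inj(p)=\min\{r_c(p),\tfrac12\ell(p)\}$ for every $p\in M$, so I simply take the infimum over $p$. The inequality $\inj(M)\geq\min\{r_c(M),\tfrac12\ell(M)\}$ is immediate because each $\inj(p)\geq\min\{r_c(M),\tfrac12\ell(M)\}$; and since $\inj(p)\leq r_c(p)$ and $\inj(p)\leq\tfrac12\ell(p)$ hold for all $p$, passing to infima gives $\inj(M)\leq r_c(M)$ and $\inj(M)\leq\tfrac12\ell(M)$, hence the reverse inequality and thus equality.

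For the compact case, one direction is nearly free: a non-trivial closed geodesic of length $L$ is a geodesic loop based at each of its points $x$, so $\inj(M)\leq\inj(x)\leq\tfrac12\ell(x)\leq\tfrac12 L$; since such a geodesic exists by Fet--Lyusternik \cite{FetLyusternik1951}, this gives $\ell(M)\leq\ell_c(M)<\infty$ and $\inj(M)\leq\tfrac12\ell_c(M)$. Combined with $\inj(M)\leq r_c(M)$ from the first part, this yields $\inj(M)=\min\{r_c(M),\tfrac12\ell(M)\}\leq\min\{r_c(M),\tfrac12\ell_c(M)\}$. So everything comes down to showing that either $r_c(M)=\inj(M)$, or there is a closed geodesic of length $2\inj(M)$: in the first case $\min\{r_c(M),\tfrac12\ell_c(M)\}\leq r_c(M)=\inj(M)$, and in the second $\min\{r_c(M),\tfrac12\ell_c(M)\}\leq\tfrac12\ell_c(M)\leq\inj(M)$, and either way the previous display forces equality.

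Here I use compactness: since $\inj$ is continuous, $\inj(M)=\inj(p)$ for some $p$, and I may choose $v\in\mathrm{cut}(p)$ with $|v|=\inj(p)$ and apply Theorem~\ref{klingenberg lemma} to $v$. In case (i), $\exp_p$ is singular at $v$, so $p$ has a conjugate point along a geodesic of length $\inj(M)$; hence $r_c(M)\leq r_c(p)\leq\inj(M)$ and therefore $r_c(M)=\inj(M)$, as needed. In case (ii), rescaling $\gamma_v|_{[0,2]}$ to a unit-speed geodesic loop $\gamma$ of length $2\inj(M)$ based at $p$, with midpoint $q=\gamma(\inj(M))$, the two halves of $\gamma$ are minimal geodesics $\sigma_1,\sigma_2\colon[0,\inj(M)]\to M$ from $p$ to $q$; since they meet smoothly at the interior point $q$ one has $\sigma_1'(\inj(M))=-\sigma_2'(\inj(M))$, so $\gamma$ is a closed geodesic if and only if it is also smooth at $p$, i.e.\ if and only if $\sigma_1'(0)=-\sigma_2'(0)$.

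The heart of the argument is to verify $\sigma_1'(0)=-\sigma_2'(0)$ in case (ii), and I would argue by contradiction. If it fails, then, noting that $\sigma_1\neq\sigma_2$ (otherwise $\gamma'(\inj(M))=0$) also rules out $\sigma_1'(0)=\sigma_2'(0)$, the two unit vectors are neither equal nor antipodal, so there is $w\in T_pM$ with $\langle w,\sigma_1'(0)\rangle>0$ and $\langle w,\sigma_2'(0)\rangle>0$. Moving the basepoint along $t\mapsto\exp_p(tw)$ and using that case (ii) precludes $q$ from being conjugate to $p$ along $\sigma_1$ or $\sigma_2$, the geodesic loop persists for small $t>0$, and the first-variation formula computes its length as $2\inj(M)-t\langle w,\sigma_1'(0)+\sigma_2'(0)\rangle+O(t^2)<2\inj(M)$, the interior vertex $q$ contributing nothing because $\sigma_1'(\inj(M))+\sigma_2'(\inj(M))=0$; hence $\inj(\exp_p(tw))\leq\tfrac12\ell(\exp_p(tw))<\inj(M)$, contradicting the minimality of $\inj$ at $p$. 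I expect the delicate point to be exactly the persistence of the perturbed loop, which is where the choice of $p$ as a global minimizer of $\inj$ does real work, and for the technical details I would follow \cite{Klingenberg1959} and \cite{CheegerEbin1975}. (An alternative in the generic situation: by the symmetry of the cut locus, $q$ also minimizes $\inj$ and $p\in\mathrm{cut}(q)$ with $d(q,p)=\inj(M)$, so Theorem~\ref{klingenberg lemma} may be applied at $q$ as well.)
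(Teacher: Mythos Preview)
The paper does not prove Corollary~\ref{klingenberg corollary}; it is stated as a classical result and attributed to Klingenberg~\cite{Klingenberg1959}, so there is no in-paper argument to compare against. Your proof is the standard one found in the references you cite, and the overall strategy---infimize the pointwise identity from Theorem~\ref{klingenberg lemma}, then in the compact case pick $p$ minimizing $\inj$ and show the loop in case~(ii) must close up---is correct.

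Two small points of precision are worth flagging. First, the phrase ``case~(ii) precludes $q$ from being conjugate to $p$'' is not quite right: case~(ii) by itself says nothing about conjugacy. What you want is that if case~(i) fails, i.e.\ $\inj(p)<r_c(p)$, then \emph{no} geodesic of length $\inj(p)$ from $p$ carries a conjugate point, so in particular neither $\sigma_1$ nor $\sigma_2$ does; this is automatic once you organize the dichotomy as ``either $\inj(p)=r_c(p)$, or $\inj(p)<r_c(p)$.'' Second, arguing via a \emph{persisting geodesic loop} at $p_t$ requires knowing that $p$ is not conjugate to itself along the full loop $\gamma$ of length $2\inj(M)$, which is not immediate. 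The cleaner version---and the one in \cite{CheegerEbin1975}---keeps the endpoint $q$ fixed: the absence of conjugate points along each $\sigma_i$ yields, for $p_t$ near $p$, two distinct geodesics from $p_t$ to $q$ each of length strictly less than $\inj(M)$, which forces $\inj(p_t)<\inj(M)$ directly and avoids the loop-persistence issue. Your first-variation computation is then applied to each segment separately rather than to the concatenated loop.
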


\noindent It's not clear that a pointwise result like that in Theorem \ref{klingenberg lemma} holds for the convexity radius, but global equalities like those in Corollary \ref{klingenberg corollary} will be proved.

The following lemma is an application of the second variation formula. Note that a $C^2$ function $f : M \rightarrow \real$ is \textbf{strictly convex} if its Hessian $\nabla^2 f$ is positive definite. This is equivalent to the condition that, for any non-constant geodesic $\gamma : (-\varepsilon,\varepsilon) \rightarrow M$, $(f \circ \gamma)''(0) > 0$.

\begin{lemma}\label{convexity of distance}
    Let $M$ be a Riemannian manifold and $p \in M$. If $R \leq r_f(p)$ and $\exp_p$ is defined and injective on $B(0,R) \subset T_p M$, then $d^2(p,\cdot) : B(p,R) \rightarrow [0,R^2)$ is strictly convex.
\end{lemma}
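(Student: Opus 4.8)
The plan is to fix a unit-speed geodesic $\gamma : (-\varepsilon,\varepsilon) \rightarrow B(p,R)$ and show that $(d^2(p,\cdot) \circ \gamma)''(0) > 0$, which by the remark preceding the lemma is exactly what strict convexity of $d^2(p,\cdot)$ on $B(p,R)$ means. Since $R \leq \inj(p)$, every point $q$ of $B(p,R)$ is joined to $p$ by a unique minimal geodesic lying in $B(p,R)$, and $f := d^2(p,\cdot)$ is smooth there; moreover $f$ takes values in $[0,R^2)$. The case $\gamma(0) = p$ is immediate since in normal coordinates at $p$ one has $f = |\cdot|^2$ near $p$, whose Hessian is $2$ times the identity. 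So assume $\gamma(0) = q \neq p$, write $\rho = d(p,q) \in (0,R)$, and let $\sigma : [0,\rho] \rightarrow B(p,R)$ be the unit-speed minimal geodesic from $p$ to $q$.

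First I would reduce to the first-order behavior of the distance function $d(p,\cdot)$ together with a second-variation computation. Using $f = d(p,\cdot)^2$, the chain rule gives $(f\circ\gamma)''(0) = 2\big[(d(p,\cdot)\circ\gamma)'(0)\big]^2 + 2\,d(p,q)\,(d(p,\cdot)\circ\gamma)''(0)$, so it suffices to show $(d(p,\cdot)\circ\gamma)''(0) \geq 0$, with the inequality strict whenever $\gamma'(0)$ is tangent to $\sigma$ at $q$ (in which case the first term vanishes). The quantity $(d(p,\cdot)\circ\gamma)''(0)$ is computed by the standard second-variation formula: build the variation of $\sigma$ whose variation field is the Jacobi field $J$ along $\sigma$ with $J(0) = 0$ and $J(\rho) = \gamma'(0)$ (this exists and is unique because $\rho < r_c(p)$, as $r_f(p) \leq r_c(p)$ forces $\rho < \inj(p) \leq r_c(p)$ via $R \le r_f(p)$), and the endpoint curve is then a reparametrization of $\gamma$ near $0$. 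The index-form computation yields
\[
    (d(p,\cdot)\circ\gamma)''(0) = \frac{1}{\rho}\,I_\sigma(J^\perp,J^\perp) + \langle \nabla_{\gamma'}\gamma', \nabla_r\rangle\big|_{q},
\]
where $J^\perp$ is the component of $J$ normal to $\sigma$, $I_\sigma$ is the index form of $\sigma$, and $\nabla r$ is the gradient of $d(p,\cdot)$; the last term vanishes since $\gamma$ is a geodesic. Thus everything comes down to showing $I_\sigma(J^\perp,J^\perp) \geq 0$, and strictly so when $J^\perp \not\equiv 0$ or when $\gamma'(0)$ has a nonzero tangential component.

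The heart of the argument, and the step I expect to be the main obstacle, is the positivity of the index form $I_\sigma(J^\perp,J^\perp)$ for the normal Jacobi field $J^\perp$ along $\sigma$ with $J^\perp(0) = 0$: this is precisely where the hypothesis $\rho < r_f(p)$ enters. The point is that, because $\sigma$ has length $\rho < r_f(p)$, there is no non-trivial normal Jacobi field $W$ along $\sigma$ with $W(0) = 0$ and $\|W\|'(\rho) = 0$; equivalently, $\sigma$ has no focal point of $p$ in its interior or at its endpoint, so the focal index form on normal fields vanishing at $0$ (with free endpoint at $q$) is positive definite. Since $J^\perp$ is itself a normal Jacobi field vanishing at $0$, an integration-by-parts shows $I_\sigma(J^\perp,J^\perp) = \langle \nabla_{\sigma'}J^\perp(\rho), J^\perp(\rho)\rangle = \tfrac12 \big(\|J^\perp\|^2\big)'(\rho)$, and the absence of focal points up to and including $q$ — together with $J^\perp(0) = 0$ — forces this to be $\geq 0$, with equality only if $J^\perp \equiv 0$. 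Combining this with the chain-rule identity above gives $(f\circ\gamma)''(0) > 0$ in all cases: if $J^\perp \not\equiv 0$ the index term is strictly positive, and if $J^\perp \equiv 0$ then $\gamma'(0)$ is a nonzero multiple of $\sigma'(\rho)$, so $(d(p,\cdot)\circ\gamma)'(0) \neq 0$ and the first term $2[(d(p,\cdot)\circ\gamma)'(0)]^2$ is strictly positive. This completes the proof; the only genuinely delicate point is tracking the boundary term in the index form and confirming that $\rho < r_f(p)$ — rather than merely $\rho \le r_f(p)$ — yields strict positivity, which is guaranteed since $q \in B(p,R)$ is an interior point and the definition of $r_f(p)$ via a minimum means no focal Jacobi field occurs for lengths strictly below $r_f(p)$.
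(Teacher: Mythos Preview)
The paper does not actually prove this lemma; it merely states that it is ``a well-known application of the second variation formula'' and moves on. Your proposal supplies precisely such a second-variation argument and is essentially correct, so it is fully in line with what the paper indicates.

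One small slip worth fixing: in your second paragraph you write that it suffices to have strict inequality in $(d(p,\cdot)\circ\gamma)''(0)\geq 0$ ``whenever $\gamma'(0)$ is tangent to $\sigma$ at $q$ (in which case the first term vanishes),'' but the first term $2[(d(p,\cdot)\circ\gamma)'(0)]^2$ vanishes exactly when $\gamma'(0)$ is \emph{normal} to $\sigma'(\rho)$, not tangent. Your final paragraph gets this right --- when $J^\perp\equiv 0$ the vector $\gamma'(0)$ is tangential and the first term is strictly positive, while when $J^\perp\not\equiv 0$ the index term is strictly positive --- so the logic is sound; just correct the earlier sentence. The $\tfrac{1}{\rho}$ coefficient in your displayed formula is also nonstandard (the usual identity is $\nabla^2 r(v,v)=I_\sigma(J^\perp,J^\perp)$ with no extra factor, depending on conventions), but since only the sign matters this does not affect the argument.
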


\noindent It will be useful to know that the convexity radius is globally bounded above by the focal radius. This may be proved using the following consequence of the Morse index theorem \cite{Morse1934}: If $\gamma$ and $\sigma$ are unit-speed geodesics, $\gamma(0) = p$, $\gamma(T) = \sigma(0)$, $T = d \big( p,\sigma(0) \big) < \inj(p)$, and $p$ is focal to $\sigma$ along $\gamma|_{[0,T]}$, then, for sufficiently small $s$ and $\varepsilon$ satisfying $0 < \varepsilon < s$, the ball $B \big( \gamma(-s),T + s - \varepsilon \big)$ is not strongly convex. This implies an inequality relating the focal and convexity radiuses near each point.

\begin{lemma}\label{convexity radius bounded by focal radius}
    Let $M$ be a complete Riemannian manifold. Then, for each $p \in M$, $\liminf_{x \rightarrow p} r(x) \leq r_f(p)$.
\end{lemma}

\noindent The global inequality $r(M) \leq r_f(M)$ follows immediately. One may also prove a global inequality relating the conjugate and focal radiuses.

\begin{lemma}\label{focal versus conjugate radiuses}
    If $M$ is a complete Riemannian manifold, then $r_f(M) \leq \frac{1}{2}r_c(M)$.
\end{lemma}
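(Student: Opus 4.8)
The plan is to extract, from a pair of conjugate points realizing (almost) the conjugate radius, two points whose focal radii are each at most half the distance between them. If $r_c(M) = \infty$ the inequality is vacuous, so I assume $r_c(M) < \infty$ and fix $\varepsilon > 0$. Using the definition of $r_c(M)$ as an infimum and of each $r_c(p)$ as a minimum, I would choose a point $p \in M$, a unit-speed geodesic $\gamma : [0,T] \to M$ with $\gamma(0) = p$, and a non-trivial normal Jacobi field $J$ along $\gamma$ with $J(0) = J(T) = 0$ and $T = r_c(p) < r_c(M) + \varepsilon$; write $q = \gamma(T)$.

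The heart of the matter is to find an interior parameter at which $\| J \|$ has vanishing derivative. I would consider the smooth non-negative function $g(t) = \langle J(t),J(t)\rangle$ on $[0,T]$: it vanishes at both endpoints and is not identically zero, so it attains a positive maximum at some $t^* \in (0,T)$. There $J(t^*) \neq 0$, and $\langle J(t^*),J'(t^*)\rangle = \frac{1}{2}g'(t^*) = 0$, so $\| J \|$ is smooth near $t^*$ with $\| J \|'(t^*) = 0$. Now $\gamma|_{[0,t^*]}$ is a unit-speed geodesic issuing from $p$ and $J|_{[0,t^*]}$ is a non-trivial normal Jacobi field along it with $J(0) = 0$ and $\| J \|'(t^*) = 0$, so the definition of the focal radius gives $r_f(p) \leq t^*$. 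Symmetrically, I would reverse $\gamma$: setting $\bar\gamma(s) = \gamma(T-s)$ and $\bar J(s) = J(T-s)$ for $s \in [0,T-t^*]$, one has $\bar\gamma' = -\gamma'$, so the curvature term $R(\bar J,\bar\gamma')\bar\gamma'$ is unchanged and $\bar J$ is again a non-trivial normal Jacobi field along the unit-speed geodesic $\bar\gamma$; moreover $\bar J(0) = 0$ and $\| \bar J \|'(T-t^*) = -\| J \|'(t^*) = 0$, so $r_f(q) \leq T-t^*$.

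Since one of $t^*$ and $T-t^*$ is at most $\frac{1}{2}T$, combining the two bounds gives
\[
    r_f(M) \leq \min \big\{ r_f(p),r_f(q) \big\} \leq \min \big\{ t^*,T-t^* \big\} \leq \frac{1}{2}T < \frac{1}{2}\big( r_c(M) + \varepsilon \big)\textrm{,}
\]
and letting $\varepsilon \to 0$ yields $r_f(M) \leq \frac{1}{2}r_c(M)$.

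I don't anticipate a genuine obstacle. The two points needing care are the verification that the reversed field $\bar J$ really is a normal Jacobi field along $\bar\gamma$ (the brief computation above) and the choice to maximize $\langle J,J\rangle$ rather than $\| J \|$, which sidesteps any non-differentiability of $\| J \|$ at zeros of $J$. It is also worth remarking that the argument invokes only the definitions of $r_c$ and $r_f$, so it needs neither compactness of $M$ nor any infimum to be attained; even the appeal to $r_c(p)$ being a minimum can be dropped by working instead with a geodesic along which some pair of points is conjugate at distance less than $r_c(M) + \varepsilon$.
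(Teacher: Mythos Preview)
Your proposal is correct and follows essentially the same approach as the paper: pick a Jacobi field vanishing at both ends of a geodesic nearly realizing $r_c(M)$, find an interior critical point of $\|J\|$, and use the two halves to bound the focal radius at one endpoint or the other by half the length. Your version is slightly more detailed (maximizing $\langle J,J\rangle$ to locate $t^*$ and explicitly verifying the reversed field is Jacobi), and you obtain both bounds $r_f(p)\le t^*$ and $r_f(q)\le T-t^*$ simultaneously rather than splitting into cases, but the argument is the same in substance.
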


\begin{proof}
    Fix $\varepsilon > 0$, and let $p \in M$ be such that $r_c(p) < r_c(M) + \varepsilon$. Choose a unit-speed geodesic $\gamma : [0,r_c(p)] \rightarrow M$ satisfying $\gamma(0) = p$ and a non-trivial normal Jacobi field $J$ along $\gamma$ with $J(0) = 0$ and $J \big( r_c(p) \big) = 0$. Write $q = \gamma \big( r_c(p) \big)$. There must exist $0 < T < r_c(p)$ such that $\| J \|'(T) = 0$. If $T \leq \frac{1}{2}r_c(p)$, then $r_f(p) \leq \frac{1}{2}r_c(p)$. If $T \geq \frac{1}{2}r_c(p)$, then, by reversing the parameterizations of $\gamma$ and $J$, one finds that $r_f(q) \leq \frac{1}{2}r_c(p)$. In either case, $r_f(M) < \frac{1}{2}[r_c(M) + \varepsilon]$.
\end{proof}

\noindent It's now possible to prove global equalities for the convexity radius.

\begin{theorem}\label{global equality for convexity radius}
    The convexity radius of a complete Riemannian manifold $M$ is given by $r(M) = \min \big\{ r_f(M), \frac{1}{4}\ell(M) \big\}$. When $M$ is compact, it is also given by $r(M) = \min \big\{ r_f(M), \frac{1}{4}\ell_c(M) \big\}$.
\end{theorem}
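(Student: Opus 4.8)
My plan is to prove the two inequalities $r(M) \leq \min\{r_f(M), \tfrac{1}{4}\ell(M)\}$ and $r(M) \geq \min\{r_f(M), \tfrac{1}{4}\ell(M)\}$ separately, and then deduce the compact case from the fact that, when $M$ is compact, $\ell(M) = \ell_c(M)$ (a closed geodesic is a geodesic loop, and conversely a shortest geodesic loop, if shorter than the shortest closed geodesic, could be smoothed to produce a shorter closed geodesic — this is the same argument that underlies the analogous clause in Corollary 2.2, so I would cite it rather than reprove it). For the upper bound $r(M) \leq r_f(M)$ I simply invoke Lemma 2.4, taking the infimum over $p$. For the upper bound $r(M) \leq \tfrac{1}{4}\ell(M)$, fix a geodesic loop $\gamma:[0,L]\to M$ based at $q$ with $L$ close to $\ell(M)$, let $p = \gamma(L/2)$ be its midpoint, and observe that for $s$ slightly larger than $L/4$ the ball $B(p,s)$ contains the two "halves" $\gamma(0,L/2]$ and $\gamma[L/2,L)$ near $q$, so $q$ has two nearly-equidistant preimages; a short argument shows $B(p,s)$ fails to be strongly convex (either $q$ is joined to $p$ by two distinct minimal geodesics, or the loop itself is not minimal between points near $q$), hence $r(p) \leq L/4$.

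The substantive direction is the lower bound: I must show that if $R < \min\{r_f(M), \tfrac{1}{4}\ell(M)\}$ then every ball $B(p,R)$ is strongly convex. The natural tool is Lemma 2.3, but that lemma gives strict convexity of $d^2(p,\cdot)$ only on $B(p,\min\{r_f(p),\inj(p)\})$, so I first need to control $\inj(p)$ from below in terms of $R$. Here Klingenberg's Theorem 2.1 enters: $\inj(p) = \min\{r_c(p), \tfrac12\ell(p)\}$, and by Lemma 2.5 we have $r_c(p) \geq r_c(M) \geq 2r_f(M) > 2R$, while $\tfrac12\ell(p) \geq \tfrac12\ell(M) > 2R$; thus $\inj(p) > 2R$, and in particular $\min\{r_f(p),\inj(p)\} \geq \min\{r_f(M), 2R\}$, which exceeds $R$ (using $R < r_f(M)$). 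Wait — I should be careful that I want this for all $p$, and indeed $r_f(p) \geq r_f(M) > R$ and $\inj(p) > 2R > R$, so $d^2(p,\cdot)$ is strictly convex on a ball containing $B(p,R)$.

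Given strict convexity of $d^2(p,\cdot)$ on a neighborhood of $\overline{B(p,R)}$, I then run the standard argument that this forces $B(p,R)$ to be strongly convex: any minimal geodesic between two points of $B(p,R)$ stays inside $B(p,R)$ because $d^2(p,\cdot)$ restricted to it is a strictly convex function, hence has no interior maximum and thus is bounded by its endpoint values, which are $< R^2$; and uniqueness of the minimal geodesic follows because two distinct minimal geodesics between the same endpoints, both of length $< 2R < \inj(p')$ for either endpoint $p'$, would contradict $\inj(p') > 2R$ (two minimal geodesics of length $< \inj$ joining the same pair of points cannot exist). I expect the main obstacle to be bookkeeping the various radii so that the single inequality $R < \min\{r_f(M),\tfrac14\ell(M)\}$ simultaneously delivers strict convexity of $d^2(p,\cdot)$ on $\overline{B(p,R)}$ \emph{and} the injectivity estimate $2R < \inj(p)$ needed for uniqueness — the factor $\tfrac14$ (versus Klingenberg's $\tfrac12$) is exactly what makes both work, and I will want to state cleanly why $2R < \inj(p)$ really does follow, using Lemma 2.5 to convert the focal-radius bound into the conjugate-radius bound that Klingenberg's formula requires.
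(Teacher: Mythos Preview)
Your approach to the lower bound is genuinely different from the paper's and in some ways more transparent: the paper argues by contradiction, extracting from a sequence of escaping minimal geodesics a limiting geodesic $\gamma$ with $d\big(p,\gamma(t)\big)\geq r(p)$ for all $t$ and endpoints on $\partial B\big(p,r(p)\big)$, and only then invokes Lemma~\ref{convexity of distance}. You instead go directly---establish $\inj(q)>2R$ for every $q$ via Theorem~\ref{klingenberg lemma} and Lemma~\ref{focal versus conjugate radiuses}, then apply Lemma~\ref{convexity of distance} once. This is a legitimate and cleaner alternative.

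There is, however, a real circularity in the step ``$d^2(p,\cdot)$ restricted to $\gamma$ is a strictly convex function, hence has no interior maximum.'' Lemma~\ref{convexity of distance} gives strict convexity of $d^2(p,\cdot)$ only on $B(p,R')$ with $R'=\min\{r_f(p),\inj(p)\}$, and you have shown only $R'>R$, not (say) $R'\geq 3R$, which is what the crude bound $d\big(p,\gamma(t)\big)<R+2R$ would demand. So you cannot assert convexity of $t\mapsto d^2\big(p,\gamma(t)\big)$ until you already know $\gamma\subset B(p,R')$---essentially the conclusion. The standard repair is a connectedness argument: since $d(x,y)<2R<\inj(M)$, the minimal geodesic $\gamma_{x,y}$ is unique and varies continuously in $(x,y)$; along any path in $B(p,R)\times B(p,R)$ from $(p,p)$ to $(x,y)$, the set of parameters with $\gamma_{x,y}\subset B(p,R')$ is open, and it is closed because on that set strict convexity forces $\gamma_{x,y}\subset\overline{B(p,R)}\subset B(p,R')$. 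This openness--closedness step is precisely the limiting control that the paper's sequential argument bakes in from the outset.

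On the compact clause: your justification ``a shortest geodesic loop \dots\ could be smoothed to produce a shorter closed geodesic'' is not right as stated---smoothing the corner yields a shorter \emph{curve}, and curve-shortening a contractible curve may collapse it to a point. You do not actually need $\ell(M)=\ell_c(M)$: the upper bound $r(M)\leq\tfrac14\ell_c(M)$ follows from $\ell(M)\leq\ell_c(M)$ (every closed geodesic is a loop), and for the lower bound you can rerun your own argument verbatim with $\ell_c(M)$ in place of $\ell(M)$, invoking the compact clause $\inj(M)=\min\{r_c(M),\tfrac12\ell_c(M)\}$ of Corollary~\ref{klingenberg corollary} in place of Theorem~\ref{klingenberg lemma}. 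That is exactly how the paper handles it.
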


\begin{proof}
    Assume for the sake of contradiction that $r(M) > \frac{1}{4}\ell(M)$. Since $\ell(M) < \infty$, one may set $\varepsilon = \frac{4}{5}[r(M) - \frac{1}{4}\ell(M)] > 0$. Let $\gamma : [0,1] \rightarrow M$ be a non-trivial geodesic loop with $L(\gamma) < \ell(M) + \varepsilon$. Then $B \big( \gamma(\frac{1}{4}),\frac{1}{4}L(\gamma) + \varepsilon \big)$ and $B \big( \gamma(\frac{3}{4}),\frac{1}{4}L(\gamma) + \varepsilon \big)$ are strongly convex, from which it follows that each of $\gamma|_{[0,\frac{1}{2}]}$ and $-\gamma|_{[\frac{1}{2},1]}$ is the unique minimal geodesic connecting $\gamma(0)$ to $\gamma(\frac{1}{2})$. This contradiction, together with Lemma \ref{convexity radius bounded by focal radius}, implies that $r(M) \leq \min \big\{ r_f(M),\frac{1}{4}\ell(M) \big\}$.

    Assume that there exists $p \in M$ such that $r(p) < \min \big\{ r_f(M),\frac{1}{4}\ell(M) \big\}$. Let $\varepsilon_i \rightarrow 0$ be a decreasing sequence such that each $B \big( p,r(p) + \varepsilon_i \big)$ is not strongly convex. Then there exist $x_i,y_i \in B \big( p,r(p) + \varepsilon_i \big)$ and minimal geodesics $\gamma_i : [0,1] \rightarrow M$ from $x_i$ to $y_i$ such that $\gamma_i([0,1]) \not\subset B \big( p,r(p) + \varepsilon_i \big)$. Define constants $0 \leq \delta_i < \varepsilon_i$ by
    \[
        r(p) + \delta_i  = \max \{ d(p,x_i),d(p,y_i) \}\textrm{,}
    \]
    and fix $t_i \in (0,1)$ such that $d \big( p,\gamma_i(t_i) \big) \geq r(p) + \varepsilon_i$. Let $(a_i,b_i)$ be the connected component of $\big\{ t \in (0,1) \st d \big( p,\gamma_i(t) \big) > r(p) + \delta_i \big\}$ containing $t_i$. Without loss of generality, replace $x_i$ and $y_i$ with $\gamma_i(a_i)$ and $\gamma_i(b_i)$, respectively, and $\gamma_i$ with $\gamma_i|_{[a_i,b_i]}$, reparameterizing the latter so that $\gamma_i(0) = x_i$ and $\gamma_i(1) = y_i$. Since $L(\gamma_i) \leq 2[r(p) + \varepsilon_1]$ for all $i$, one may, by passing to a subsequence, suppose without loss of generality that $x_i \rightarrow x \in \partial B \big( p,r(p) \big)$, $y_i \rightarrow y \in \partial B \big( p,r(p) \big)$, and $\gamma_i$ uniformly converges to a minimal geodesic $\gamma : [0,1] \rightarrow M$ from $x$ to $y$.

    Assume that $x = y$, and choose $\delta > 0$ such that
    \[
        r(p) + 3\delta < \min \Big\{ r_f(M),\frac{1}{4}\ell(M) \Big\} \leq \frac{1}{2} \min \Big\{ r_c(M),\frac{1}{2}\ell(M) \Big\} = \frac{1}{2} \inj(M)\textrm{.}
    \]
    Let $i$ be large enough that $x_i,y_i \in B(x,\delta)$. Then $L(\gamma_i) = d(x_i,y_i) < 2\delta$, so $\gamma_i([0,1]) \subset B \big( p,r(p) + 3\delta \big)$. Write $R = \min \{ r_f(p),\inj(p) \}$. Since
    \[
        d(p,x_i) = d(p,y_i) = r(p) + \delta_i < r(p) + 3\delta < R
    \]
    and $\gamma_i$ is not constant, it follows from Lemma \ref{convexity of distance} that $d \big( p,\gamma_i(t) \big) < r(p) + \delta_i$ for all $t \in (0,1)$. This is a contradiction. So $x \neq y$.

    Since $d(x,y) \leq 2r(p) < \inj(M)$, $\gamma$ is the unique minimal geodesic connecting $x$ to $y$. Let $w_i,z_i \in B \big( p,r(p) \big)$ be sequences such that $w_i \rightarrow x$ and $z_i \rightarrow y$. Then there exist unique minimal geodesics $\sigma_i : [0,1] \rightarrow M$ from $w_i$ to $z_i$, which satisfy $\sigma_i([0,1]) \subset B \big( p,r(p) \big)$. Since $\sigma_i \rightarrow \gamma$, one finds that $\gamma([0,1]) \subset B(p,R)$. Because $\gamma$ is not constant, Lemma \ref{convexity of distance} implies that $d \big( p,\gamma(t) \big) < r(p)$ for all $t \in (0,1)$. However, by construction, $d \big( p,\gamma(t) \big) \geq r(p)$ for all $t \in [0,1]$. It follows from this contradiction that $r(M) = \min \big\{ r_f(M),\frac{1}{4}\ell(M) \big\}$.

    In the case that $M$ is compact, $\ell_c(M) \leq \ell(M)$, so $r(M) \leq \min \big\{ r_f(M),\frac{1}{4}\ell_c(M) \big\}$. Since $\inj(M) = \min \big\{ r_c(M),\frac{1}{2}\ell_c(M) \big\}$, the argument in the preceding three paragraphs shows, essentially without modification, that $r(M) = \min \big\{ r_f(M),\frac{1}{4}\ell_c(M) \big\}$.
\end{proof}

\section{Construction of compact manifolds with $\frac{r(M)}{\inj(M)}$ arbitrarily small}

According to the characterizations of the injectivity and convexity radiuses in the preceding section, $r(M) = \frac{1}{2}\inj(M)$ whenever $r_f(M) = \frac{1}{2}r_c(M)$. Gulliver's examples of compact manifolds with focal points but no conjugate points show that this latter equality may fail to hold \cite{Gulliver1975}.

\begin{theorem}[Gulliver]\label{gulliver}
    Let $(M,g)$ be a compact Riemannian manifold with constant sectional curvature $-1$. Suppose $p \in M$ satisfies $\inj(p) \geq 1.7$. Then there exists a Riemannian metric $h$ on $M$ that agrees with $g$ except on a $g$-ball $B_R = B(p,R)$ of radius $R = 1.7$ and that satisfies the following:

    \vspace{2pt}

    (i) $r_c(M,h) = \infty$; and

    \vspace{2pt}

    (ii) $r_f(B_R,h|_{B_R}) < \infty$.

    \vspace{2pt}

    \noindent The Riemannian manifold $(B_R,h|_{B_R})$ depends only on the dimension of $M$.
\end{theorem}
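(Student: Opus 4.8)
The plan is to make $h$ rotationally symmetric about $p$ on $B_R$ and equal to $g$ everywhere else, so that the entire construction reduces to the choice of a single warping profile. Since $R = 1.7 \leq \inj(p)$, geodesic polar coordinates about $p$ are valid on $B_R$, and in them $g = dr^2 + \sinh^2 r\, d\Omega^2$, where $d\Omega^2$ is the round metric on the unit sphere $S^{n-1} \subset T_p M$ and $n = \dim M$. Fix a smooth function $f$ on $[0,R]$ that is positive on $(0,R]$ and satisfies $f(r) = \sinh r$ for $r$ near $0$ and for $r$ near $R$; set $h = dr^2 + f(r)^2\, d\Omega^2$ on $B_R$ and $h = g$ on $M \setminus B_R$. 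Near $p$ this is a smooth metric (it is literally hyperbolic there), and it glues smoothly to $g$ across $\partial B_R$ because both sides are hyperbolic on a collar. By construction $h$ agrees with $g$ off $B_R$. Moreover $(B_R, h|_{B_R})$ is isometric to the warped-product ball with metric $dr^2 + f(r)^2\, d\Omega^2$ on $(0,R) \times S^{n-1}$, capped off at the origin; since $f$ will be chosen independently of the particular triple $(M,g,p)$, the isometry type of $(B_R, h|_{B_R})$ depends only on $n$.

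For a metric of the form $dr^2 + f(r)^2\, d\Omega^2$, the sectional curvature of a $2$-plane containing $\partial_r$ is $-f''/f$, while that of a plane tangent to a geodesic sphere $\{r = \mathrm{const}\}$ is $(1-(f')^2)/f^2$; moreover the normal Jacobi fields along the unit-speed radial geodesic $\gamma(r) = \exp_p(rv)$ that vanish at $r = 0$ are exactly the fields $f(r)E(r)$ with $E$ parallel and normal to $\gamma$. Hence $\| f(r)E(r) \|'(T) = f'(T)$, and item (ii) follows as soon as $f$ is chosen so that $f'$ acquires a zero at some $T \in (0,R)$: then $f(r)E(r)$ is a non-trivial normal Jacobi field along $\gamma|_{[0,T]}$, a geodesic lying in $B_R$ since $T < R$, witnessing $r_f(B_R, h|_{B_R}) \leq T < \infty$. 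Such an $f$ is elementary to produce — retain $f = \sinh r$ near $0$ and near $R$, and on a middle annulus let $f'$ dip until it has a zero and then recover, keeping $f > 0$ and matching all derivatives of $\sinh$ at the ends of the annulus (using, if needed, an overshoot of $f'$ elsewhere to respect the integral constraint $\int f' = \Delta\sinh$). It is precisely here that one exploits the room afforded by taking $R = 1.7$ rather than something tiny, and the shape of $f$ must be chosen with the estimate below in mind.

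The substance of the theorem is item (i): that $(M,h)$ has no conjugate points, even though $f'' < 0$ and $1-(f')^2 > 0$ on the perturbed annulus force planes of positive curvature there. Since $h$ coincides with the hyperbolic $g$ off $B_R$ and on the collar near $\partial B_R$, only geodesics that penetrate the interior of $B_R$ can feel the perturbation, and each such geodesic crosses the perturbed region in a segment of $h$-length bounded by a constant depending only on $R$. Along any unit-speed geodesic $\gamma$ of $(M,h)$, a non-trivial normal Jacobi field $J$ with $J(0) = 0$ satisfies the Sturm inequality $\|J\|'' \geq -K(t)\|J\|$, where $K(t)$ is the sectional curvature of the plane spanned by $\gamma'$ and $J$, with $K \leq -1$ off $B_R$ and $K$ below a fixed positive constant inside; equivalently one tracks the matrix Riccati solution $U = A'A^{-1}$ of $U' + U^2 + R_\gamma = 0$ with $A(0) = 0$, $A'(0) = I$, using that the curvature operator $R_\gamma$ of a rotationally symmetric metric is diagonalized by the radial/spherical splitting. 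In the hyperbolic exterior this Riccati quantity is driven monotonically toward $+1$ and stays away from the values from which it blows up; inside $B_R$ it can be decreased, but only by the amount the bounded-length, boundedly-curved perturbation permits, and the numerical choices $R = 1.7$ and the profile $f$ are made precisely so that this decrease never brings $U$ to a blow-up value. Hence $\det A$ never vanishes, no geodesic of $(M,h)$ carries a non-trivial normal Jacobi field vanishing twice, and $r_c(M,h) = \infty$.

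The main obstacle is exactly this last estimate: the focusing accumulated inside $B_R$ is necessarily nonzero — a focal point has been \emph{deliberately built in} — so one must show that it is nonetheless too feeble to conjugate any geodesic of the assembled manifold. This is where Gulliver's explicit comparison inequalities for a concrete admissible $f$, together with the specific radius $R = 1.7$, do the work; everything else (smoothness of the glued metric, the isometry type of $(B_R, h|_{B_R})$ depending only on $n$, and the finiteness of the focal radius) is bookkeeping around the warped-product formulas above.
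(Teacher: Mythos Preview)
The paper does not prove this theorem; it attributes it to Gulliver and gives only a one-paragraph description of the construction. That description, however, differs from yours in a key respect: Gulliver writes $B_R$ as an inner ball $B_r$ together with an annulus $B_R \setminus B_r$, changes the metric on $B_r$ to have constant curvature $+1$ (so $f(r) = \sin r$ near the center), chooses $r$ large enough that the spherical cap already contains focal points ($r > \pi/2$) but no conjugate points ($r < \pi$), and then interpolates to $\sinh r$ on the annulus. The value $R = 1.7$ is tied to this picture: one needs a cap of radius exceeding $\pi/2 \approx 1.57$ plus a thin collar on which to interpolate without creating conjugate points. Your profile instead keeps $f = \sinh r$ near $0$ and manufactures the focal point by forcing $f'$ to vanish on an intermediate annulus; that is a different construction, and nothing in your argument explains why $R = 1.7$ is the right radius for it.

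This matters for the one genuinely hard step, item (i). Your Riccati/Sturm outline is reasonable in spirit, but at the end you explicitly defer the estimate to ``Gulliver's explicit comparison inequalities for a concrete admissible $f$.'' Those inequalities are proved for the spherical-cap profile, not for a hyperbolic-center-with-bumped-annulus profile, so you cannot simply invoke them. Either switch your warping function to match Gulliver's (spherical on $[0,r]$, interpolated to hyperbolic on $[r,R]$) so that his no-conjugate-points estimate applies verbatim, or carry out a complete comparison argument for your own $f$ and determine what value of $R$ it actually requires. As written, the proposal describes the easy bookkeeping correctly but leaves the substantive estimate both unproved and misattributed.
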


\noindent Gulliver's construction is to write $B_R$ as the union of a $g$-ball $B_r$ and an annulus $B_R \setminus B_r$, change the metric on $B_r$ to have constant curvature $(0.55)^2$, where $B_r$ is large enough that it contains focal points but no conjugate points, and interpolate between the metrics on $B_r$ and $M \setminus B_R$ through a radially symmetric metric on $B_R$. Provided $\inj(p) \geq 1.7$, this can be done without introducing conjugate points.

It will be useful to know that the fundamental group of a connected hyperbolic manifold is \textbf{residually finite}, which means that, for any non-trivial $[\gamma] \in \pi_1(M,p)$, there is a normal subgroup $G$ of $\pi_1(M,p)$ of finite index such that $[\gamma] \not\in G$. This is a special case of the following theorem of Mal'cev \cite{Mal'cev1940}, also sometimes attributed to Selberg \cite{Selberg1960}. Note that a group is \textbf{linear} if it is isomorphic to a subgroup of the matrix group $\mathrm{GL}(F,n)$ for some field $F$.

\begin{theorem}[Mal'cev]\label{mal'cev theorem}
    Every finitely generated linear group is residually finite.
\end{theorem}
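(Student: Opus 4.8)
The plan is to reduce the statement to a standard fact about finitely generated commutative rings: a finitely generated $\mathbb{Z}$-algebra that is a domain has ``enough'' quotients by maximal ideals, and each such quotient is a finite field.

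First I would replace the ambient field by a more arithmetic ring. Suppose $G \leq \mathrm{GL}(F,n)$ is generated by $g_1,\dots,g_m$. Let $R \subseteq F$ be the subring generated by $1$, by all matrix entries of the $g_i$, and by all matrix entries of the $g_i^{-1}$. Then $R$ is a finitely generated $\mathbb{Z}$-algebra, it is an integral domain (being a subring of $F$), and $G \leq \mathrm{GL}(R,n)$, since every element of $G$ is a word in the $g_i^{\pm 1}$ and entries of products of matrices over $R$ remain in $R$.

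Next comes the key input. Because $R$ is a finitely generated $\mathbb{Z}$-algebra, it is a Jacobson ring; since $R$ is a domain, the prime ideal $(0)$ is therefore the intersection of all maximal ideals of $R$. Moreover, by the general (arithmetic) form of the Nullstellensatz, $R/\mathfrak{m}$ is a finite field for every maximal ideal $\mathfrak{m} \subset R$. This is the one genuinely nontrivial ingredient, and I would quote it from a standard reference rather than reprove it. The consequence I need is: for each nonzero $a \in R$ there is a maximal ideal $\mathfrak{m}$ with $a \notin \mathfrak{m}$, so that reduction modulo $\mathfrak{m}$ is a ring homomorphism onto a finite field $k := R/\mathfrak{m}$ sending $a$ to a nonzero element.

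Finally I would assemble the pieces in the form required by the definition of residual finiteness. Let $g \in G$ with $g \neq I$. Then $g - I$ is a nonzero matrix over $R$, so some entry $a = (g-I)_{ij}$ is a nonzero element of $R$. Choose a maximal ideal $\mathfrak{m}$ with $a \notin \mathfrak{m}$ and let $k = R/\mathfrak{m}$, a finite field. Entrywise reduction defines a group homomorphism $\pi \colon \mathrm{GL}(R,n) \to \mathrm{GL}(k,n)$, and $\pi(g) \neq I$ because the $(i,j)$ entry of $\pi(g) - I$ is the nonzero image of $a$. Then $N = \ker(\pi|_G)$ is a normal subgroup of $G$ of finite index, since $\mathrm{GL}(k,n)$ is finite, and $g \notin N$. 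As $g$ was an arbitrary nontrivial element, $G$ is residually finite. The main obstacle is exactly the commutative-algebra statement invoked in the third paragraph; the reduction to $\mathrm{GL}(R,n)$ and the packaging via $\pi$ are routine.
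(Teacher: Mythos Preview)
The paper does not supply a proof of this theorem at all: it quotes Mal'cev's result with a citation and immediately uses it as a black box to pass to finite covers of hyperbolic manifolds. So there is nothing in the paper to compare your argument against.

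That said, your outline is the standard and correct proof. The reduction to a finitely generated $\mathbb{Z}$-algebra $R$ is clean (and your inclusion of the entries of the $g_i^{-1}$ is exactly what guarantees $G \leq \mathrm{GL}(R,n)$, since then every $g \in G$ and its inverse have entries in $R$, hence $\det g$ is a unit of $R$). The substantive step you flag---that a finitely generated $\mathbb{Z}$-algebra is Jacobson and has finite residue fields at maximal ideals---is precisely the arithmetic Nullstellensatz, and citing it is appropriate. The final packaging via entrywise reduction $\mathrm{GL}(R,n) \to \mathrm{GL}(k,n)$ is a genuine group homomorphism because units of $R$ map to units of $k$, so determinants stay invertible; the rest is as you say.
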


\noindent If $M$ is compact and has a hyperbolic metric, then, for each $C > 0$, there exist only finitely many non-trivial closed geodesics $\{ \gamma_1,\ldots,\gamma_k \}$ in $M$ of length less than twice $C$ (see Theorem 12.7.8 in \cite{Ratcliffe2006}). For each corresponding $[\gamma_i] \in \pi_1(M,q_i)$, there exists a normal subgroup $G_i$ of $\pi_1(M,q_i)$ of finite index such that $[\gamma_i] \not\in G_i$. Each $G_i$ is identified with a unique finite-index subgroup of $\pi_1(M,p)$ via conjugation by any path connecting $p$ to $q_i$. Letting $G = \cap_{i=1}^k G_i$, one obtains a finite-index normal subgroup of $\pi_1(M,p)$ that does not contain, up to conjugation, any of the $[\gamma_i]$. Therefore, all non-trivial closed geodesics in the finite covering space $\tilde{M} = H^n / G$ have length at least twice $C$. Since $r_c(\tilde{M}) = \infty$, an application of Corollary \ref{klingenberg corollary} proves the following result, which is well known to hyperbolic geometers.

\begin{lemma}\label{large hyperbolic manifolds}
    For each $n \geq 2$ and $C > 0$, there exists a compact $n$-dimensional Riemannian manifold $M$ that has constant sectional curvature $-1$ and satisfies $\inj(M) \geq C$.
\end{lemma}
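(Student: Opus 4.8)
The plan is to produce the manifold not directly but by first taking a single closed hyperbolic $n$-manifold and then passing to a carefully chosen finite covering, using the fact that in constant negative curvature the injectivity radius is governed entirely by the length of the shortest closed geodesic. Concretely, since $r_c(M) = \infty$ for any hyperbolic $M$, Corollary \ref{klingenberg corollary} reduces the problem to exhibiting a compact hyperbolic $n$-manifold whose shortest closed geodesic has length at least $2R$. So I would set $\ell = 2R$ and aim to apply the covering-space construction already sketched in the paragraph preceding this lemma.

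First I would fix, once and for all, some compact Riemannian $n$-manifold $M_0$ carrying a hyperbolic metric; for $n = 2$ this is classical, and closed hyperbolic $n$-manifolds exist for every $n \geq 2$ by arithmetic constructions (see \cite{Ratcliffe2006}). Its fundamental group $\pi_1(M_0,p)$ is a discrete cocompact subgroup of the isometry group of $H^n$, hence a finitely generated linear group, so Theorem \ref{mal'cev theorem} shows it is residually finite. It is also torsion-free, so every nontrivial element is a hyperbolic isometry possessing an axis, whose translation length equals the length of the closed geodesic in $M_0$ it determines; in particular the finitely many closed geodesics $\gamma_1,\dots,\gamma_k$ of $M_0$ of length at most $\ell$ (finite by Theorem 12.7.8 of \cite{Ratcliffe2006}) account, up to conjugacy, for all elements of translation length at most $\ell$.

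Next I would invoke residual finiteness to choose, for each $i$, a finite-index normal subgroup $G_i \leq \pi_1(M_0,p)$ with $[\gamma_i] \notin G_i$, and set $G = \bigcap_{i=1}^k G_i$. Then $G$ is a finite-index normal subgroup containing no conjugate of any $[\gamma_i]$, so the corresponding finite covering $M = H^n / G$ is again a compact hyperbolic $n$-manifold. To finish: any closed geodesic in $M$ projects to a closed geodesic in $M_0$ of the same length and is represented by an element $g \in G \leq \pi_1(M_0,p)$ of that same translation length; were this length at most $\ell$, then $g$ would be conjugate in $\pi_1(M_0,p)$ to some $[\gamma_i]$, contradicting normality of $G$ and $[\gamma_i]\notin G_i$. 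Hence every closed geodesic in $M$ has length $> \ell = 2R$, so $\ell_c(M) \geq 2R$, and Corollary \ref{klingenberg corollary} yields $\inj(M) = \min\{r_c(M),\tfrac12 \ell_c(M)\} = \tfrac12 \ell_c(M) \geq R$.

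The one genuinely external ingredient is the existence of a closed hyperbolic manifold in each dimension $n \geq 2$, which is where I would be careful to cite a precise reference; everything else is the residual-finiteness argument already outlined above. The only subtlety worth stating explicitly is the standard dictionary between free homotopy classes of loops, conjugacy classes in the fundamental group, and closed geodesics in a negatively curved manifold, together with the fact that in a compact hyperbolic manifold each such conjugacy class is realized by a closed geodesic whose length is the translation length of the associated deck transformation.
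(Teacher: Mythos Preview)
Your proposal is correct and follows essentially the same approach as the paper: start from a fixed compact hyperbolic $n$-manifold, use residual finiteness (Mal'cev) together with the finiteness of short closed geodesics to pass to a finite cover in which every closed geodesic has length greater than $2R$, and then apply Corollary~\ref{klingenberg corollary}. You are in fact slightly more explicit than the paper, correctly flagging the existence of a closed hyperbolic manifold in every dimension $n\geq 2$ as the one genuinely external ingredient and spelling out the conjugacy-class/translation-length dictionary that the paper's sketch leaves implicit.
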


\noindent It may now be shown that Gulliver's construction can produce compact manifolds $M$ of any dimension $n \geq 2$ with $\frac{r(M)}{\inj(M)}$ arbitrarily small.

\begin{theorem}
    For each $n \geq 2$ and $\varepsilon > 0$, there exists a compact $n$-dimensional Riemannian manifold $M$ with $\frac{r(M)}{\inj(M)} < \varepsilon$.
\end{theorem}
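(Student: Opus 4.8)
The plan is to feed a hyperbolic manifold with enormous injectivity radius into Gulliver's construction (Theorem \ref{gulliver}) and then read off the conclusion from the radius formulas of Theorem \ref{global equality for convexity radius} and Corollary \ref{klingenberg corollary}; the only genuine work will be to check that Gulliver's surgery does not shorten the shortest closed geodesic too much. Fix $n \geq 2$. Gulliver's theorem attaches to $n$ the finite number $F_n := r_f(B_R, h|_{B_R})$; moreover, since $(B_R, h|_{B_R})$ depends only on $n$ and $(B_R, g|_{B_R})$ is, up to isometry, the geodesic ball of radius $R = 1.7$ in hyperbolic $n$-space (also depending only on $n$), there is a constant $c_n \geq 1$, depending only on $n$, with $c_n^{-1} g \leq h \leq c_n g$ on $\overline{B_R}$, hence on all of $M$ because $h = g$ off $B_R$. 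Using Lemma \ref{large hyperbolic manifolds}, I would take a compact hyperbolic $n$-manifold $(M,g)$ with
\[
    \inj(M,g) \geq \max \Big\{ 1.7,\ \frac{2 c_n^{1/2} F_n}{\varepsilon} \Big\}\textrm{,}
\]
choose any $p \in M$ (so that $\inj(p) \geq 1.7$), and let $h$ be the metric furnished by Theorem \ref{gulliver}.

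The easy half comes first. Every geodesic of $(B_R, h|_{B_R})$ with image in $B_R$, together with its normal Jacobi fields, is also a geodesic of $(M,h)$ with the same Jacobi fields, so $r_f(M,h) \leq r_f(B_R, h|_{B_R}) = F_n$, whence $r(M,h) \leq F_n$ by Theorem \ref{global equality for convexity radius}. Since $r_c(M,h) = \infty$ by Gulliver, Corollary \ref{klingenberg corollary} gives $\inj(M,h) = \tfrac12 \ell_c(M,h)$. Therefore $r(M,h)/\inj(M,h) \leq 2F_n/\ell_c(M,h)$, and it suffices to prove $\ell_c(M,h) \geq 2\inj(M,g)/c_n^{1/2}$.

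I expect this last inequality to be the crux, and I would prove it on the universal cover. Since $r_c(M,h) = \infty$, the complete simply connected Riemannian cover $(\widetilde{M}, \tilde h)$ has no conjugate points, so the Cartan--Hadamard argument for manifolds without conjugate points shows $\exp_q$ is a diffeomorphism for every $q \in \widetilde{M}$; in particular $\widetilde{M}$ has no geodesic loops, so every closed $h$-geodesic $\gamma : [0,T] \rightarrow M$ represents a nontrivial free homotopy class, corresponding to some $\phi \neq 1$ in the deck group $\Gamma = \pi_1(M)$. Lifting $\gamma$ to a path $\tilde\gamma$ with $\tilde\gamma(T) = \phi\,\tilde\gamma(0)$ and using $\tilde h \geq c_n^{-1}\tilde g$ gives
\[
    L_h(\gamma) = L_{\tilde h}(\tilde\gamma) \geq d_{\tilde h}\big( \tilde\gamma(0),\phi\,\tilde\gamma(0) \big) \geq c_n^{-1/2} d_{\tilde g}\big( \tilde\gamma(0),\phi\,\tilde\gamma(0) \big) \geq c_n^{-1/2}\ell_c(M,g)\textrm{,}
\]
because $d_{\tilde g}(x,\phi x)$ is at least the $\tilde g$-translation length of $\phi$, which is the length of the closed $g$-geodesic freely homotopic to $\gamma$, hence at least $\ell_c(M,g)$; and $\ell_c(M,g) = 2\inj(M,g)$ since $r_c(M,g) = \infty$. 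Taking the infimum over closed $h$-geodesics yields $\ell_c(M,h) \geq 2\inj(M,g)/c_n^{1/2}$, so
\[
    \frac{r(M,h)}{\inj(M,h)} \leq \frac{2F_n}{\ell_c(M,h)} \leq \frac{c_n^{1/2}F_n}{\inj(M,g)} \leq \frac{\varepsilon}{2} < \varepsilon\textrm{.}
\]
The only real obstacle is this control of the closed geodesics of $h$: Gulliver's surgery might in principle introduce a short closed geodesic near $p$, and excluding this is exactly where one uses that the modified metric has no conjugate points, not merely no focal points.
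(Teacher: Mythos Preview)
Your proof is correct and follows the same overall plan as the paper: apply Gulliver's construction to a compact hyperbolic manifold with very large injectivity radius, bound $r(M,h)$ above by a dimensional constant coming from the fixed model $(B_R,h|_{B_R})$, and use $r_c(M,h)=\infty$ together with Klingenberg to reduce matters to a lower bound on $\ell_c(M,h)$.

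Where you genuinely diverge from the paper is in how you obtain that lower bound. The paper argues geometrically on $M$: for a closed $h$-geodesic $\gamma$ meeting $B_R$, it picks a maximal subarc $\gamma|_{[a,b]}$ lying outside $B_R$, closes it with a short $g$-geodesic $\sigma\subset B_R$, observes that the loop $\gamma|_{[a,b]}\cdot\sigma^{-1}$ is homotopically nontrivial, and concludes $L_h(\gamma)\geq 2\inj(M,g)-2R$. You instead use a soft metric-comparison argument on the universal cover: from the dimensional comparability $c_n^{-1}g\leq h\leq c_n g$ on $\overline{B_R}$ (hence on all of $M$), together with the Cartan--Hadamard theorem for manifolds without conjugate points and the translation-length interpretation of closed geodesics in a hyperbolic manifold, you get $\ell_c(M,h)\geq c_n^{-1/2}\ell_c(M,g)=2c_n^{-1/2}\inj(M,g)$. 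Your route is shorter and avoids the case analysis and the somewhat delicate homotopy step with the closing arc; the paper's route is more hands-on and yields an explicit additive loss $2R$ rather than an unspecified multiplicative constant $c_n^{1/2}$. Either way the crux---that no short closed $h$-geodesic can appear---rests on exactly the feature you flag at the end: $r_c(M,h)=\infty$ forces every closed $h$-geodesic to be homotopically nontrivial.
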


\begin{proof}
    Let $D < \infty$ denote the focal radius of the $n$-dimensional manifold in part (ii) of Theorem \ref{gulliver}. According to Lemma \ref{large hyperbolic manifolds}, there exists a compact $n$-dimensional Riemannian manifold $(M,g)$ that has constant sectional curvature $-1$ and satisfies $\inj(M,g) > \max \big\{ 2R, \frac{D}{\varepsilon} + R \big\}$, where $R = 1.7$. Apply Gulliver's construction to produce a metric $h$ on $M$ that agrees with $g$ except on a $g$-ball $B_R = B(p,R)$, has no conjugate points, and satisfies $r_f(M,h) < D$. By Corollary \ref{klingenberg corollary} and Lemma \ref{convexity radius bounded by focal radius}, $\inj(M,h) = \frac{1}{2}\ell_c(M,h)$ and $r(M,h) \leq r_f(M,h) < D$.

    Let $\gamma : [0,1] \rightarrow M$ be a non-trivial closed $h$-geodesic. If $\gamma([0,1]) \cap B_R = \emptyset$, then $L_h(\gamma) = L_g(\gamma) \geq 2\inj(M,g)$. If $\gamma([0,1]) \cap B_R \neq \emptyset$, then one may suppose without loss of generality that $\gamma(0) \in B_R$. Since $(M,h)$ has no conjugate points, $[\gamma] \neq 0$, which implies the existence of $t_0 \in (0,1)$ such that $d_g \big( p,\gamma(t_0) \big) > R$. Let $(a,b)$ be the connected component of $\big\{ t \in (0,1) \st d_g \big( p,\gamma(t) \big) > R \big\}$ containing $t_0$. Because $R < r(M,g)$, there exists a unique minimal geodesic $\sigma : [0,1] \rightarrow M$ of $(M,g)$ connecting $\gamma(a)$ to $\gamma(b)$, which satisfies $\sigma([0,1]) \subset B_R$. Note that $L_g(\sigma) \leq 2R$. Since $(M,g)$ has no conjugate points, the concatenation $\gamma|_{[a,b]} \cdot \sigma^{-1}$ is homotopically non-trivial, which implies that $L_g(\gamma|_{[a,b]} \cdot \sigma^{-1}) \geq 2\inj(M,g)$. Hence
    \[
        L_h(\gamma) > L_g(\gamma|_{[a,b]}) \geq 2\inj(M,g) - 2R\textrm{.}
    \]
    It follows that $\inj(M,h) \geq \inj(M,g) - R$ and, consequently, that
    \[
        \frac{r(M,h)}{\inj(M,h)} < \frac{D}{\inj(M,g) - R} < \varepsilon\textrm{.}
    \]
\end{proof}

\bibliography{bibliography}
\bibliographystyle{amsplain}

\end{document}